\numberwithin{equation}{section}
\providecommand{\U}[1]{\protect\rule{.1in}{.1in}}
\theoremstyle{plain}
\newtheorem{thm}{Theorem}[section]
\newtheorem{lem}[thm]{Lemma}
\newtheorem{cor}[thm]{Corollary}
\newtheorem{prop}[thm]{Proposition}
\newtheorem{pps}{Proposition}
\newtheorem{teo}{Theorem}
\theoremstyle{definition}
\newtheorem{?}[thm]{Problem}
\theoremstyle{definition}
\newtheorem*{nt*}{Notation}
\newcommand{\im}{{\rm Im}\,}
\begin{document}
\title[]{Expansivity and shadowing for composition operators on Paley-Wiener spaces}

\author{C. F. \'{A}lvarez}

\address{Departamento de Matemáticas, Universidad del Atlántico, Puerto Colombia, Colombia}  
\email{cfalvarez@mail.uniatlantico.edu.co}

\author{O. R. Severiano$^{*}$}

\address{ IMECC, Universidade Estadual de Campinas, Campinas, Brazil}
\email{osmar.rrseveriano@gmail.com}
\thanks{* Corresponding author. \\O.R. Severiano is a postdoctoral fellow at the Programa de Matemática and is supported by UNICAMP (Programa de Pesquisador de Pós-Doutorado PPPD) }

\subjclass[2020]{Primary 47A16, 47B33; Secondary 37D45}

\keywords{Composition operator, Paley-Wiener spaces, Expansive, Shadowing}
	
\begin{abstract}

In this article, we study composition operators $C_{\phi}f=f\circ \phi$ on the full range of Paley-Wiener spaces $B^2_a$ for $a>0.$ Here we compute the spectrum and the spectral radius of $C_{\phi}$ on $B^2_a.$ The spectrum of $C_{\phi}$ and the location of the fixed point of $\phi$ play a fundamental role in our analysis to show that no Paley-Wiener space $B^2_a$ supports composition operators possessing the positive shadowing property. We also provide a complete classification of the composition operators on $B^2_a$ that are positively expansive and absolutely Cesàro bounded. As a consequence of these results we show that no Paley-Wiener space $B^2_a$ supports Li-Yorke chaotic composition operators.
\end{abstract}
	
\maketitle

\section{Introduction}
Let $H$ be a complex Hilbert space of complex-valued functions on a subset $\Omega.$  If $\phi$ is a self-map of $\Omega$, then the \textit{composition operator} with \textit{symbol} $\phi$ on the space $H$ is the linear operator defined by
\begin{align*}
C_{\phi}f=f\circ \phi,\quad f\in H.
\end{align*}
Operators of this type have been studied on a variety of function spaces, such as Dirichlet, Fock, $L^2$ and Newton spaces \cite{Bernardes-Darji,Cowen, Eva-Montes, Eva, Guo,Gordon}. The main direction of research in this framework relies on the comparison between the properties of $C_{\phi}$ with those of $\phi,$ and as is known in the literature, there are several issues in this regard. For instance, boundedness, compactness, norms, spectra, spectral radius, normality, complex symmetry, cyclicity, hypercyclicity, supercyclicity  and more recently notions such as expansivity (and its variants), Li-Yorke chaos and the positive shadowing property. In many cases, it is difficult to solve these problems completely, and some remain open.

In \cite{Chacon}, Chacón, Chacón, and Giménez initiated the study of composition operators on the classical Paley-Wiener Hilbert space $B^2_\pi.$ They proved that $C_\phi$ is a bounded composition operator on $B^2_\pi$ if and only if 
\begin{align}\label{symbol}
\phi(z)=cz+d\ \text{where} \ c\in \mathbb{R} \ \text{with} \ 0<|c|\leq 1 \ \text{and} \ d\in \mathbb{C},
\end{align}
 and as a consequence, showed that such operators are neither compact nor supercyclic. Similarly, on the Paley-Wiener Hilbert space $B^2_a$ for $a>0,$ the only bounded composition operators are those induced by symbols of the form \eqref{symbol}. In \cite{Hai}, Hai, Noor, and Severiano  studied bounded composition operators on the full range of Paley-Wiener spaces $B^2_a$, completely  characterizing which of these operators are self-adjoint, unitary, normal and complex symmetric. They also studied the phenomena of cyclicity for such operators, for instance, they extended \cite[Theorem 2.7]{Chacon}, showing that no Paley–Wiener space $B^2_a$ supports supercyclic bounded composition operators; and that the cyclicity of  $C_\phi$ occurs if and only if $c = 1$ and $d \in \mathbb{C}\setminus\mathbb{R}$ or 
$0 < |d| \leq \pi/a$ with $d \in \mathbb{R}$ \cite[Theorem 4]{Hai}.

Recently, researchers have begun to investigate other properties of linear dynamics in the setting of composition operators defined on spaces of holomorphic functions. \'Alvarez and Henr\'iquez-Amador \cite{Alvarez} and Blois and Severiano \cite{blois2025dynamics} completely characterized the linear fractional composition operators that are expansive (and its variants, as positively expansive and uniformly positively expansive), Li-Yorke chaotic and have the positive shadowing property on the Hardy space $H^2(\mathbb{C}_+)$ of the right half-plane  and on the weighted Bergman  space $\mathcal{A}_{\alpha}^2(\mathbb{C}_+)$ of the right half-plane, respectively.

In this article, we continue the study of bounded composition operators on Paley-Wiener spaces $B^2_a$. We show that in this setting, some properties can be completely determined, such as the spectrum  (Proposition \ref{spectrum}) and the spectral radius (Proposition \ref{radius}). We also show that no Paley-Wiener space $B^2_a$ supports bounded composition operators that are compact (Proposition \ref{compactness}), Li-Yorke chaotic (Proposition \ref{Li-Yorke chaotic}) and possess the positive shadowing property (Theorem \ref{shadowing}). We finish this article by characterizing all the bounded composition operators that are positively expansive (Theorem \ref{ThmExpansive}) and absolutely Cesàro bounded (Theorem \ref{abs}).

We summarize some of our main results in the following table. 

\FloatBarrier
\begin{table}[htb!]
\caption{Properties of composition operators on the Paley-Wiener space $B^2_a$} 
\centering
\begin{tabular}{|c|c|c|c|}
\hline
\textbf{Symbol $\phi(z)=cz+d$}  & \textbf{Spectrum of $C_{\phi}$}  & \textbf{Pos. exp. $C_{\phi}$ } & \textbf{Abs. Ces. bou. $C_{\phi}$ }\\
      \hline
      $c=-1$ and $d\in \mathbb{C}$   & $\{-1, 1\}$
      & \ding{55}
      & \ding{52}   \\[2mm]
      \hline 

$0<|c|<1$ and $d\in \mathbb{C}$& $\left\{ \lambda \in \mathbb{C}: |\lambda|\leq 1/\sqrt{|c|} 
\right\}$
      & \ding{52}
      & \ding{55} \\[2mm]
\hline

$c=1$ and $d\in \mathbb{R}$ & $\{e^{\mathrm{i}dt}:t\in [-a, a]\}$
      & \ding{55} 
      & \ding{52}  \\[2mm]
\hline

$c=1$ and $d\in \mathbb{C}\setminus\mathbb{R}$ & $\{e^{\mathrm{i}dt}:t\in [-a, a]\}$
      & \ding{55}  & \ding{55}   \\[2mm]
\hline

\end{tabular}
\end{table}
\FloatBarrier

\section{Preliminaries}\label{s1}
Throughout the paper, $\mathbb{N}, \mathbb{R}$ and $\mathbb{C}$ denote the set of all positive integers, the set of all real numbers and the set of all complex numbers, respectively. We write $\mathrm{Im}(z)$ to denote the imaginary part of the complex number $z$. If $f:X\to X$ is a function and $n$ is a positive integer,  then the $n$-th iterate of $f$ is 
 $f^{n}=f\circ f \circ\cdots\circ f$ ($n$ times). For convenience, we write $f^{0}$ to be the identity function. For $z\in X,$ the orbit of $z$ for $f$ is the set $\{z, f(z), f^2(z), f^3(z), \cdots\}.$

Let $E$ be a complex Banach space, the \textit{unit sphere} of $E$ is $S_{E}=\{x\in E:\|x\|=1\}.$ For simplicity, if $E=\mathbb{C}$, then we write $\mathbb{T}$ instead of $S_{\mathbb{C}}.$ By an \textit{operator} on $E,$ we mean a continuous linear map $T:E\to E.$ We denote by $L(E)$ the set of all operators on $E.$ Let $T\in L(E)$ for the next definitions. The norm of $T$ is $\|T\|=\sup\{\|Tx\|: x\in S_E\}.$ The \textit{spectrum}  of $T,$ denoted by $\sigma(T),$ is $\sigma(T)=\{  \lambda \in \mathbb{C}: T-\lambda I \ \text{is not invertible} \}.$  The \textit{spectral radius} of $T,$ denoted by $r(T),$ is determined by the spectral radius formula $r(T)=\lim_{n\to \infty}  \|T^n\|^{1/n}.$ We say that   $T_1\in L(E_1)$ and $T_2\in L(E_2)$ are (isometrically) \textit{similar} if there exists an  (isometric) invertible continuous linear map $U:X_1\to X_2$ such that $T_1=U^{-1}T_2U.$ This is very useful for our purposes, since several properties are invariant under similarity.

Let us now present the relevant definitions for our work. We say that $T\in L(E)$ is:
\begin{itemize}

\item  \textit{hyperbolic} if $\sigma(T)\cap \mathbb{T}=\emptyset.$

\item \textit{positively expansive} if for each $x\in S_{E}$ there exists  $n\in \mathbb{N}$ such that $\|T^nx\|\geq 2;$

\item \textit{Li-Yorke chaotic} if there exists $x\in E$ such that
\begin{align*}
\displaystyle\liminf_{n\to \infty}\|T^nx\|=0 \quad \text{and} 
\quad \displaystyle\limsup_{n\to \infty}\|T^nx\|=\infty.
\end{align*}

\item \textit{absolutely Cesàro bounded} if there exists a constant $C>0$ such that 
\begin{align*}
\sup\left\lbrace\frac{1}{n}\sum_{j=1}^n\|T^jx\|:n\in \mathbb{N}\right\rbrace\leq C\|x\|, \quad \text{for all} \ x\in E. 
\end{align*}
\end{itemize}
Let $\delta>0,$ then by a $\delta$-\textit{pseudo-orbit} of $T\in L(E)$, we mean a sequence $(x_n)_{n=1}^{\infty}$ in $E$ such that
\begin{align*}
\|Tx_n-x_{n+1}\|\leq \delta \quad \text{for all} \ n\in \mathbb{N}. 
\end{align*}
We say that $T\in L(E)$ has the \textit{positive shadowing property}  if for every $\varepsilon>0$ there exists $\delta>0$ such that every $\delta$-pseudo-orbit $(x_n)_{n =1}^{\infty}$ of $T$ is $\varepsilon$-shadowed  by a real orbit of $T,$ that is, there exists $ x \in E$ such that 
\begin{align*}
\|T^nx-x_n\|\leq \varepsilon, \quad \ \text{for all} \ n\in \mathbb{N}.
\end{align*} 
 We refer to \cite{Abadias, Bermudez,Bernardes-Bonilla, Bernardes} for further information on these concepts.

We now introduce the main framework of our study: \textit{Composition operators on Paley-Wiener spaces.} For $a>0,$ the Paley-Wiener space $B_a^2$  consists of all entire functions  of exponential type less than or equal to $a$  whose restriction to $\mathbb{R}$ belongs to $L^2(\mathbb{R}).$  Each space $B^2_a$ is a Hilbert space when endowed with the inner product
\begin{align*}
\langle f, g\rangle=\int^{\infty}_{-\infty}f(t)\overline{g(t)}dt.
\end{align*}
We shall use $\|\cdot\|_a$ to denote the corresponding norm. This norm makes $B^2_a$ a reproducing kernel Hilbert space, this means that for each $w\in \mathbb{C}$ there exists a function $k_w\in B^2_a,$ called \textit{reproducing kernel} for $B^2_a$ at $w,$ satisfying $f(w)=\langle f, k_w\rangle$ for all $f\in B^2_a.$ The function $k_w$ on $B^2_a$ is given by  
\begin{equation*}
k_w(z)=\frac{\sin a(z-\overline{w})}{\pi(z-\overline{w})},
\end{equation*}
\cite[Theorem 16]{Branges} and therefore its norm is 
\begin{align}\label{norm of kernel}
\|k_w\|_a^2=
\begin{cases}
\frac{2a}{\pi}, & \text{if} \ w\in \mathbb{R} \\
\frac{\sinh 2a \im(w)}{2\pi\mathrm{Im}(w)},& \text{if} \ w\in  \mathbb{C}\setminus\mathbb{R}.
\end{cases}
\end{align}
By Paley-Wiener theorem the sequence $(k_{\frac{\pi n}{a}})_{n=-\infty}^{\infty}$ constitutes a complete orthogonal set for $B^2_a$ (for instance see \cite{Levin}, for a proof in $B_{\pi}^2$).
 
 The only bounded composition operators on $B^2_a$ are those induced by affine symbols of the form 
\begin{align}\label{symbol1}
\phi(z)=cz+d, 
\end{align}
where $c\in \mathbb{R}$ with $0<|c|\leq 1$ and $d\in \mathbb{C}.$ For this reason, in the remainder of this work, the term \textit{composition operator on $B^2_a$} means that the inducing symbol of such an operator is as in \eqref{symbol1}. In \cite{Hai}, Hai, Noor and Severiano  showed that each composition operator on $B^2_a$ is isometrically similar to a weighted composition operator on $L^2[-a, a].$

\begin{pps}\label{equivalence operator} \normalfont (\cite[Proposition 1]{Hai}) If $\phi(z)=cz+d$ where $c\in \mathbb{R}$ with $0<|c|\leq 1$ and $d\in \mathbb{C},$ then the composition operator $C_{\phi}:B^2_a\to B^2_a$  is isometrically similar to the weighted composition operator $\widehat{C}_{\phi}: L^2[-a,a]\to L^2[-a,a]$ defined by 
\[(\widehat{C}_{\phi}F)(t)=\frac{1}{|c|}\chi_{(-|c|a, |c|a)}(t)e^{\frac{idt}{c}}F\left(\frac{t}{c}\right)\] 
where $\chi_{(p,q)}$ denotes a characteristic function. Moreover, we have $(\widehat{C}_{\phi}^*F)(t)=\overline{e^{idt}}F(ct).$
\end{pps}

In particular, this shows us that if $c=1$ then $C_{\phi}$ is isometrically similar to the multiplication operator $M_{\Psi},$ where $\Psi(t)=e^{idt}.$  As a consequence of Proposition \ref{equivalence operator}, Hai, Noor and Severiano characterized normal and unitary composition operators on each Paley-Wiener space $B^2_a,$ as follows.

\begin{pps}\label{normal-unitary} \normalfont (\cite[Proposition 3]{Hai}) Let $\phi(z)=cz+d$ where $c\in \mathbb{R}$ with $0<|c|\leq1$ and $d\in \mathbb{C}.$ Then 
\begin{itemize}
\item $C_{\phi}$ is normal on $B^2_a$ if and only if $c=1$ and $d\in \mathbb{C}$ or $c=-1$ and $d\in \mathbb{R},$
\item  $C_{\phi}$ is unitary on $B^2_a$ if and only if $c=\pm 1$ and $d\in \mathbb{R}.$
\end{itemize}
\end{pps}

Since $|c|\leq 1,$ the only automorphisms of the complex plane whose inverse also induce a composition operator on $B^2_a$ occur when $c=\pm 1.$

As we are interested in the behavior of the orbit of composition operators induced by symbols $\phi$ as in \eqref{symbol1}, let us compute the $n$-th iterate of such symbols. Inductively, we have 
\begin{align*}
\phi^{n}(z)= \begin{cases}z+n d, & \text{if} \ c=1, \\ 
c^{n} z+\frac{(1-c^n)d}{1-c},  & \text{if} \ c \neq 1 ,
\end{cases}
\end{align*}	
and $C_{\phi}^n=C_{\phi^{n}}.$ Therefore, if $0<|c|<1$ and $\omega:=d/(1-c),$ then $\omega$ is an attractive fixed point of $\phi$, that is,  $\phi^{n}(z)\rightarrow \omega$ pointwise as $n\rightarrow \infty$ for all $z\in \mathbb{C}.$

Now let us determine the canonical form of the symbols $\phi$ as in \eqref{symbol1}. We note that for $c\neq 1,$  we can write
 \begin{align}\label{comp}
\phi=\psi\circ \varphi\circ \psi^{-1}
\end{align}
 where $\varphi(z)=cz$ and $\psi(z)=z+d/(1-c).$ Since $C_{\psi}$ is an invertible operator on $B^2_a,$ it follows from \eqref{comp} that $C_{\phi}=C_{\psi}^{-1}C_{\varphi}C_{\psi}.$ Therefore, when investigating properties invariant under similarity, it suffices to consider the composition operators induced by the following two symbols $\varphi(z)=cz$ and $\psi(z)=z+d.$

In  \cite[Corollary~2.5]{Chacon}, Chac\'on, Chac\'on, and Gim\'enez show that there is no compact composition operator on $B^2_{\pi}.$ We finish this section by extending this result, showing that no Paley-Wiener space supports compact composition operators. Before proceeding, we need the following well-known result.

\begin{lem}\label{adjoint}Let $C_{\phi}$ be a composition operator on $B^2_a,$ then  $C_{\phi}^*k_w=k_{\phi(w)},$ for each $w\in \mathbb{C}.$ 
\end{lem}

In general, the lemma above plays an important role in the study of the compactness of composition operators in various reproducing kernel Hilbert spaces (\cite[Proposition 3.13]{Cowen}, \cite[Corollary 3.3]{Elliot} and \cite[page 43]{Shapiro}). 



\begin{prop}\label{compactness} There are no compact composition operators on $B^2_a.$
\end{prop}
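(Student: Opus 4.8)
The plan is to show that $C_\phi$ fails to be compact by producing a weakly null sequence that $C_\phi^{*}$ does not send to a norm null sequence. Recall that an operator on a Hilbert space is compact if and only if its adjoint is compact (Schauder's theorem), and that a compact operator carries weakly convergent sequences to norm convergent ones. Since the adjoint acts on reproducing kernels by the transparent rule $C_\phi^{*}k_w^{a}=k_{\phi(w)}^{a}$, it is far more convenient to test compactness of $C_\phi^{*}$ than of $C_\phi$ directly.

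First I would take the orthonormal sequence obtained by normalizing the sampling kernels, namely $e_n=\sqrt{\pi/a}\,k_{\pi n/a}^{a}$ for $n\in\Z$; these are pairwise orthogonal, have unit norm, and form a complete set, so $(e_n)_{n\in\Z}$ is an orthonormal basis of $PW_a$. By Bessel's inequality $\langle f,e_n\rangle\to 0$ for every $f\in PW_a$, hence $e_n\rightharpoonup 0$ weakly as $\abs{n}\to\infty$. Were $C_\phi$, equivalently $C_\phi^{*}$, compact, this would force $\norm{C_\phi^{*}e_n}\to 0$.

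Next I would compute $\norm{C_\phi^{*}e_n}$ and show it is bounded below by a positive constant independent of $n$, yielding the desired contradiction. Using the adjoint formula we get $C_\phi^{*}e_n=\sqrt{\pi/a}\,k_{\phi(\pi n/a)}^{a}$, so the task reduces to estimating $\norm{k_{\phi(\pi n/a)}^{a}}$ through the norm computation recorded in \eqref{norm of kernel}. The crucial observation is that $c\in\R$ forces $\im(\phi(\pi n/a))=\im(d)$ for every $n$: the affine symbol sends every real sampling point to a point sitting at the single fixed height $\im(d)$. Consequently, when $d\in\R$ the images $\phi(\pi n/a)$ are real and $\norm{k_{\phi(\pi n/a)}^{a}}=\sqrt{a/\pi}$, so $\norm{C_\phi^{*}e_n}=1$; when $d\notin\R$ the formula \eqref{norm of kernel} yields the constant value $\norm{k_{\phi(\pi n/a)}^{a}}=\sqrt{\sinh(2a\im(d))/(2\pi\im(d))}$, whence $\norm{C_\phi^{*}e_n}$ equals a fixed positive number. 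In either case $\norm{C_\phi^{*}e_n}$ does not tend to $0$, contradicting compactness.

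I expect the only delicate point to be the bookkeeping that keeps the lower bound uniform in $n$, and this is precisely where the hypothesis $c\in\R$ does the work: it pins the imaginary parts of all the shifted sampling points to the single value $\im(d)$ and thereby freezes the kernel norms. Everything else—weak nullity of an orthonormal basis, the adjoint action on kernels, and the equivalence of compactness of $C_\phi$ and $C_\phi^{*}$—is standard, so there is no genuine analytic obstacle beyond this observation.
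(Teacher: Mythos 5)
Your proposal is correct and follows essentially the same route as the paper: both invoke Schauder's theorem to pass to $C_\phi^{*}$, test it on the weakly null sequence of normalized sampling kernels $\sqrt{\pi/a}\,k_{\pi n/a}^{a}$, and use the key fact that $c\in\R$ forces $\im(\phi(\pi n/a))=\im(d)$ for all $n$, so that \eqref{norm of kernel} gives a norm bounded below by a positive constant independent of $n$. The only cosmetic difference is that you justify weak nullity via the orthonormal basis and Bessel's inequality, whereas the paper simply asserts it for the normalized kernels.
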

\begin{proof}  Let $\phi(z)=cz+d$ where $c\in \mathbb{R}$ with $0<|c|\leq 1$ and $d\in \mathbb{C},$ then $\im \left(\phi \left(n\pi/a\right)\right)=\im (d)$ for all $n\in \mathbb{N}.$ Let $K_n$ denote the normalized reproducing kernel $k_{\frac{n\pi}{a}}.$ Then $(K_n)_{n=1}^{\infty}$ is an orthonormal sequence on $B^2_a$ and \eqref{norm of kernel} gives
\begin{align*}
\|C_{\phi}^*K_n\|_{a}^{2}&=\frac{\|k_{\phi(\frac{n\pi}{a})}\|^{2}_{a}}{\|k_{\frac{n\pi}{a}}\|^{2}_{a}}=\begin{cases}\displaystyle\frac{\operatorname{sinh}2a\mathrm{Im}(d)}{4a\mathrm{Im}(d)}, & \text{if}\ d\in \mathbb{C}\setminus\mathbb{R}, \\ 
1, & \text{if}\  d\in \mathbb{R}.
\end{cases}
\end{align*}
 Since $(K_n)_{n=1}^{\infty}$ tends weakly to zero and $C_{\phi}^*K_n$ does not converge to zero in norm, it follows that $C_{\phi}^*$ is not compact. Therefore, $C_{\phi}$ is also not compact.
\end{proof}

\section{Positive shadowing property of \texorpdfstring{$C_{\phi}$}{}}\label{s2}
In this section, our main goal is to classify all the composition operators on $B^2_a$ that possess the positive shadowing property. This is done thanks to the following result, which characterizes normal operators on Hilbert spaces that possess the positive shadowing property.

\begin{teo}\label{normal-hyperbolic}\normalfont (\cite[Theorem 30]{Bernardes}) Let $H$ be a Hilbert space and $T\in L(H)$ a normal operator. Then $T$ has the positive shadowing property if and only if $T$ is hyperbolic.
\end{teo}

For this reason, we begin this section by determining the spectra of  all the composition operators on the Paley-Wiener space $B^2_a.$

\begin{prop}\label{first estimates} Let $\phi(z)=cz$ where $c\in \mathbb{R}$ with $0<|c|\leq 1.$ Then  for all $f\in B^2_a,$ we have 
\begin{align}\label{isometry}
\|C_{\phi}f\|_a=(1/\sqrt{|c|})\|f\|_a.
\end{align}
In particular, the spectrum of $C_{\phi}$ is $\sigma(C_{\phi})=\left\{\lambda\in \mathbb{C}: |\lambda|\leq 1/\sqrt{|c|}\right\},$ when $0<|c|<1.$
\end{prop}
\begin{proof} Let $f\in B^2_a.$  If $0<c\leq1,$ then a  straightforward computation gives $\|C_{\phi}f\|_a=(1/\sqrt{c})\|f\|_a.$ Now if $-1\leq c<0,$ we use auxiliary symbols $\varphi$ and $\psi$ defined by $\varphi(z)=-cz$ and $\psi(z)=-z,$ and we obtain $\phi=\psi\circ \varphi.$  For what we have done so far and since $C_{\psi}$ is a unitary operator (see Proposition \ref{normal-unitary}), we get
\begin{align*}
\|C_{\phi}f\|_{a}=\|C_{\varphi}C_{\psi}f\|_{a}=(1/\sqrt{-c})\|C_{\psi}f\|_{a}=(1/\sqrt{-c})\|f\|_{a}.
\end{align*}
 In addition, if $0<|c|<1,$ then \eqref{isometry} shows us that $\sqrt{|c|}C_{\phi}$ is a non-unitary isometry. Since every non-unitary isometry has the closed unit disk as its spectrum (by Wold decomposition \cite{Neumann}), we are able to deduce that $\sigma(C_{\phi})=\left\{\lambda\in \mathbb{C}:|\lambda|\leq 1/\sqrt{|c|}\right\}.$
\end{proof}

As a consequence of Proposition \ref{first estimates}, we deal with the problem of identifying the composition operators on $B^2_a$ with closed range.

\begin{cor} Each composition operator on $B^2_a$ has closed range.
\end{cor}

\begin{proof} Let $\phi(z)=cz+d$ where $c\in \mathbb{R}$ with $0<|c|\leq 1$ and $d\in \mathbb{C}.$ If $c=1$ then $C_{\phi}$ is invertible and hence has closed range. If $c\neq 1,$ then $C_{\phi}$ is similar to $C_{\varphi}$ where $\varphi(z)=cz.$ Since closed range is preserved under similarity, it is enough to show that $C_{\varphi}$ has closed range. Let $g$ be a function in $B^2_a$ such that there exists $(f_n)_{n=1}^{\infty}$ in $B^2_a$ with $C_{\varphi}f_n\to g$ as $n\to \infty.$ By Proposition \ref{first estimates}, for all $n,m\in \mathbb{N},$ we have
\begin{align*}
\|f_n-f_m\|_a\leq \sqrt{|c|}\|C_{\varphi}(f_n-f_m)\|_a.
\end{align*}
which implies that $(f_n)_{n=1}^{\infty}$ is a Cauchy sequence in $B^2_a.$ Thus there exists $h\in B^2_a$ such that $f_n\to h$ as $n\to \infty,$ this gives $C_{\varphi}h=g$ and therefore $C_{\varphi}$ has closed range. 
\end{proof}

Now let us determine the spectrum of all composition operator on the Paley-Wiener space $B^2_a.$ This is line with the study of linear fractional composition operators, as is done, for example in \cite{Cowen, Gallardo-Rikka, Higdon}, where the spectrum of linear fractional composition operators have been completely determined on various weighted Dirichlet spaces of the unit disk, including the Hardy space $H^2(\mathbb{D})$ and the weighted Bergman space $\mathcal{A}^2_{\alpha}(\mathbb{D})$ for $\alpha>-1.$ 

Recall that, according to the Theory of Multiplication Operators \cite{Conway}, if $\Phi:[-a,a]\to \mathbb{C}$ is a continuous function then $M_{\Phi}$ is an operator on $L^2[-a,a]$ with $\sigma(M_{\Phi})=\Phi([-a,a])$ and $\|M_{\Phi}\|=\|\Phi\|_{\infty},$ where $\|\Phi\|_{\infty}=\sup\{|\Phi(t)|:t\in [-a,a]\}.$ This will be useful for us to study the operator $M_{\Psi},$ where $\Psi(t)=e^{idt}.$

\begin{prop}\label{spectrum} Let $\phi(z)=cz+d$ where $c\in \mathbb{R}$ with $0<|c|\leq 1$ and $d\in \mathbb{C}.$ Then the spectrum of $C_{\phi}$ acting on $B^2_a$ is

\begin{enumerate}[label=\textnormal{(\arabic*)}]
    \item \label{spec1} $\sigma(C_{\phi})=\{-1,1\},$ when $c=-1;$
    \item \label{spec2} $\sigma(C_{\phi})=\left\{ \lambda \in \mathbb{C}: |\lambda|\leq 1/\sqrt{|c|} 
\right\},$ when $0<|c|<1;$
    \item \label{spec3} $\sigma(C_{\phi})=\{e^{idt}:t\in [-a, a]\},$ when $c=1.$
\end{enumerate}
\end{prop}

\begin{proof} \ref{spec1} If $c=-1,$ then $\phi^{2}$ is the identity map and hence $C_{\phi}^2=I$ is the identity operator. Now observe that for $\lambda\in \mathbb{C},$ we have
\begin{align*}
(C_{\phi}-\lambda I)(C_{\phi}+\lambda I)=(1-\lambda^2)I.
\end{align*}
Therefore $C_{\phi}-\lambda I$ is invertible if and only if $\lambda \neq \pm1. $ This gives $\sigma(C_{\phi})=\{-1,1\}.$ For the remaining situations, we use the fact that the spectrum is invariant under similarity.
\ref{spec2} If $0<|c|<1,$ then $C_{\phi}$ is similar to  $C_{\varphi}$ where $\varphi(z)=cz.$ This combined with Proposition \ref{first estimates}, allows us to conclude that $\sigma(C_{\phi})=\left\{ \lambda \in \mathbb{C}: |\lambda|\leq 1/\sqrt{|c|} 
\right\}.$ \ref{spec3} If $c=1,$ then  $C_{\phi}$ is similar to the multiplication operator $M_{\Psi}$ on $L^2[-a,a]$ (see Proposition \ref{equivalence operator}). Thus it is enough to compute the spectrum of $M_{\Psi}.$ Since $\Psi$ is a continuous function on $[-a,a],$ it follows that $\sigma(C_{\phi})=\Psi([-a,a])=\{e^{idt}:t\in [-a,a]\}.$ This completes the proof.
\end{proof}

With this result in hand, we compute the spectral radius of all composition operators on the Paley-Wiener space $B^2_a.$

\begin{prop} \label{radius}
If $\phi(z)=cz+d$ where $c\in \mathbb{R}$ with $0<|c|\leq  1$ and $d\in \mathbb{C},$ then the spectral radius of $C_{\phi}$ on $B^2_a$ is 
\begin{enumerate}[label=\textnormal{(\arabic*)}]
\item \label{sr1}$r(C_{\phi})=e^{|\im(d)|a},$ when $c=1.$
\item \label{sr2} $r(C_{\phi})=1/\sqrt{|c|},$ when $c\neq 1;$
\end{enumerate}
\end{prop}
\begin{proof} \ref{sr1} If $c=1,$ then $C_{\phi}$ is normal (see Proposition \ref{normal-unitary}) and hence $r(C_{\phi})=\|C_{\phi}\|.$ On the other hand, since $\Psi$ is a continuous function in $[-a,a]$ and  $C_{\phi}$ is isometrically similar to the multiplication operator $M_{\Psi}$ on $L^2[-a,a],$ it follows that $\|C_{\phi}\|=\|M_{\Psi}\|=\|\Psi\|_{\infty}=e^{|\im(d)|a}.$ \ref{sr2} By Proposition  \ref{spectrum}, we have $(1/\sqrt{|c|})\leq \|C_{\phi}\|$ (including for $c=1$).  Now observe that $\phi=\psi\circ \varphi$ where $\varphi(z)=cz$ and $\psi(z)=z+d.$ By part \ref{sr1} and Proposition \ref{first estimates}, we obtain  $\|C_{\psi}\|=e^{|\im(d)|a}$ and $\|C_{\varphi}\|=1/\sqrt{|c|}.$ This allows us to obtain the following estimate
\begin{align}\label{estimates}
\frac{1}{\sqrt{|c|}}\leq \|C_{\phi}\|=\|C_{\varphi}C_{\psi}\|\leq \|C_{\varphi}\|\|C_{\psi}\|=\frac{e^{|\im(d)|a}}{\sqrt{|c|}}.
\end{align}
 If $c\neq1,$ then we apply the estimate \eqref{estimates} to the $n$-th iterate of $\phi$ and obtain
\begin{align}\label{radius espectral}
\frac{1}{\sqrt{|c|}}\leq \|C_{\phi}^n\|^{\frac {1}{n}}\leq \frac{  e^{   \frac{(1-c^n)|\mathrm{Im}(d)|a}{n(1-c)}   }         }{\sqrt{|c|}}.
\end{align}
Letting $n$ go to infinity in \eqref{radius espectral},  we get the equality $r(C_{\phi})=1/\sqrt{|c|}.$ 
\end{proof}

Finally, we show that no Paley-Wiener space $B^2_a$ supports a composition operator with the positive shadowing property. It is also worth mentioning that when the inducing symbol of the composition operator has a fixed point in the complex plane, we proceed as in the proof of \cite[Proposition~1]{Alvarez} to show that such operator does not have the positive shadowing property on $B^2_a$.

\begin{thm}\label{shadowing} No bounded composition operator on $B^2_a$ has the positive shadowing property.  
\end{thm}

\begin{proof}  Let $\phi(z)=cz+d$ where $c\in \mathbb{R}$ with $0<|c|\leq 1$ and $d\in \mathbb{C}.$ We first assume $c=1.$ In this case, by Propositions \ref{spectrum} and \ref{normal-unitary}, the composition operator $C_{\phi}$  is a normal operator that is not hyperbolic. Therefore, by Theorem \ref{normal-hyperbolic}, it follows that $C_{\phi}$ does not have the positive shadowing property. Now we assume $c\neq 1,$ then $\omega=d/(1-c)$ is the fixed point of $\phi$ in the complex plane.
Now we construct a $\delta$-pseudo-orbit of $C_{\phi}$ which cannot be $\varepsilon$-shadowed by any function in $B^2_a$ for any $\varepsilon>0.$ For this, we choose $f\in B^2_a$ such that $f(\omega)\neq 0$ and we put $h:=(1/\|f\|_{_a})f.$ Now, we observe that for each $\delta>0,$ the sequence $(f_n)_{n=1}^{\infty}$ defined as follows: $f_1=\delta h$ and
\begin{align*}
f_n=\delta\sum_{j=0}^{n-1}C_{\phi}^{j}h, \quad n\geq 2,
\end{align*}
is a $\delta$-pseudo-orbit of $C_{\phi}.$ Indeed, we can show inductively that $C_{\phi}f_n=f_{n+1}-f_1$ for all $n$ and hence $\|C_{\phi}f_n-f_{n+1}\|_a=\|f_1\|_a=\delta.$ Moreover, we have $f_1(\omega)=f(\omega)/\|f\|_a$ and 
\begin{align*}
f_n(\omega)=(\delta\sum_{j=0}^{n-1}C_{\phi}^{j}h)(\omega)=\delta\sum_{j=0}^{n-1}h(\phi^j(\omega))=n\frac{\delta f(\omega)}{\|f\|_a}, \quad n\geq2.
\end{align*}
Then by the Cauchy-Schwarz inequality, given $g\in B^2_a$ we have
\begin{align}\label{i1}
\|C_{\phi}^ng-f_n\|_{a}\|k_{\omega}\|_{a}&\geq |\langle C_{\phi}^ng-f_n,k_{\omega}\rangle|=|g(\omega)-n\frac{\delta f(\omega)}{\|f\|_{a}}|\geq n\frac{\delta| f(\omega)|}{\|f\|_{a}}-|g(\omega)|
\end{align}
 Letting $n\rightarrow \infty$ in \eqref{i1}, we obtain $\|C_{\phi}^ng-f_n\|_{a}\rightarrow \infty$ which shows that $(f_n)_{n=1}^{\infty}$ cannot be $\varepsilon$-shadowed for any $\varepsilon>0.$
\end{proof}
\section{Positive expansivity of \texorpdfstring{$C_{\phi}$}{}} 

In this section, we characterize all positively expansive composition operators on $B_a^2$. This will be done based on the following characterization obtained by Bernardes \textit{et al.} in \cite{Bernardes}.

\begin{teo}\label{positively} \normalfont  (\cite[Proposition~19]{Bernardes}) Let $T\in L(E).$ Then $T$ is positively expansive if and only if $\sup\{\|T^nx\|:n\in \mathbb{N}\}=\infty$ for every non-zero $x\in E.$ 
\end{teo}

In view of Theorem \ref{positively}, we begin this section by studying the behavior of the orbits of $C_{\phi}.$ First we consider the symbol $\phi(z)=-z+d$ where $d\in \mathbb{C}.$ Since $\phi^2$ is the identity map, it follows that for all $n,$  $\phi^{2n}$ is the identity map and $\phi^{2n+1}$ is the symbol $\phi$ itself. This allows us to conclude that the set $\{C^n_{\phi}f:n\in \mathbb{N}\}$ is finite for each $f\in B^2_a$ and hence $C_{\phi}$ is not positively expansive (by Theorem \ref{positively}). Now we consider $\phi(z)=z+d$ where $d\in \mathbb{C}.$ In addition, if $d\in \mathbb{R}$ then $C_{\phi}$ is unitary (see Proposition \ref{normal-unitary}) and hence $C_{\phi}$ is not positively expansive. To deal with the case $d\in\mathbb{C}\setminus\mathbb{R}$, we need the following auxiliary result (which we will also use later when studying absolute Ces\`aro boundedness).

\begin{lem}\label{witness lemma}  For $d\in\mathbb{C}\setminus\mathbb{R},$  consider the functions $F_1$ and $F_2$ defined on $[-a,a]$ by
\begin{align*}
F_1(t)=\chi_{[0,a]}(st) \quad \text{and} \quad F_2(t)=\chi_{[-a,0]}(st),
\end{align*}
where $s:=\operatorname{sgn}(\mathrm{Im}(d))\in\{-1,1\}.$ Then $F_1,F_2\in L^2[-a,a]$ and for every $n\in \mathbb{N}$ we have 
\begin{align}\label{non-expansive}
\|M_{e^{idt}}^nF_1\|_{L^2[-a,a]}^{2}=\frac{1-e^{-2n\,|\mathrm{Im}(d)|\,a}}{2n\,|\mathrm{Im}(d)|} \quad \text{and} \quad \|M_{e^{idt}}^nF_2\|_{L^2[-a,a]}^{2}=\frac{e^{2n\,|\mathrm{Im}(d)|\,a}-1}{2n\,|\mathrm{Im}(d)|}.
\end{align}
\end{lem}

\begin{proof}
It is immediate that $F_1,F_2\in L^2[-a,a].$ To show \eqref{non-expansive}, we assume  $s=1$ for the sake of simplicity.  In this case, we have
\begin{align*}
\|M^n_{e^{idt}}F_1\|_{L^2[-a,a]}^2&=\|M_{e^{indt}}F_1\|_{L^2[-a,a]}^2=\int^a_{-a}|e^{indt}F_1(t)|^2dt=\int^a_{-a}e^{-2n\mathrm{Im}(d)t}\chi_{[0,a]}(t)dt\\
&=\int_0^ae^{-2n\mathrm{Im}(d)t}dt=\frac{1-e^{-2n\mathrm{Im}(d)a}}{2n\mathrm{Im}(d)}
\end{align*}
and again a straightforward computation gives
\begin{align*}
\|M^n_{e^{idt}}F_2\|_{L^2[-a,a]}^2&=\|M_{e^{indt}}F_2\|_{L^2[-a,a]}^2=\int^a_{-a}|e^{indt}F_2(t)|^2dt=\int^a_{-a}e^{-2n\mathrm{Im}(d)t}\chi_{[-a,0]}(t)dt\\
&=\int^0_{-a}e^{-2n\mathrm{Im}(d)t}dt=\frac{e^{2n\mathrm{Im}(d)a}-1}{2n\mathrm{Im}(d)}.
\end{align*}
For $s=-1,$ we proceed similarly. This completes the proof.
\end{proof}

\begin{prop}\label{third estimate}
Let $\phi(z)=cz+d$ where $c=\pm1$ and $d\in\mathbb{C}$. Then $C_{\phi}$ is not positively expansive on $B^2_a$.
\end{prop}

\begin{proof}
By Theorem \ref{positively}, it is enough to exhibit a non-zero function $f\in B_a^2$ whose orbit for $C_{\phi}$ is bounded. The cases $c=-1$ and  $c=1$ with $d\in\mathbb{R}$ have already been studied. To deal with the remaining case, that is, $c=1$ and $d\in\mathbb{C}\setminus\mathbb{R}$ we consider an isometric isomorphism $U:L^2[-a,a]\to B^2_a$ such that $M_{e^{idt}}=U^{-1}C_{\phi}U$ (see Proposition \ref{equivalence operator}) and the function $F_1\in L^2[-a,a]$ given in Lemma \ref{witness lemma}. Then $UF_1$ is a non-zero function in $B^2_a$ and for all $n$ we have
\begin{align}\label{expansive}
\|C_{\phi}^nUF_1\|_a^2=\|UM_{e^{idt}}^nF_1\|_{a}^2=\|M_{e^{idt}}^nF_1\|_{L^2[-a,a]}^2=\frac{1-e^{-2n\,|\mathrm{Im}(d)|\,a}}{2n\,|\mathrm{Im}(d)|}.
\end{align}
Letting $n\to \infty$ in \eqref{expansive}, we obtain $\|C_{\phi}^nUF_1\|_a\to 0$ which implies that the orbit of $UF_1$ for  $C_{\phi}$ is bounded. Therefore $C_{\phi}$ is not positively expansive.
\end{proof}

The following proof follows the steps of the proof from \cite[Lemma 3.2]{blois2025dynamics} for the weighted Bergman space $\mathcal{A}^2_{\alpha}(\mathbb{C}_+)$ of the right half-plane, for $\alpha>-1.$

\begin{prop}\label{second estimates} If $\phi(z)=cz+d$ where $c\in \mathbb{R}$ with $0<|c|<1$ and $d\in \mathbb{C},$ then for each $f\in B^2_a$ there exists $\delta>0$ such that 
\begin{align}\label{inequality}
\|C_{\phi}^nf\|_{a}\geq (\delta/\sqrt{|c|^n})\|f\|_{a}
\end{align}
for all sufficiently large $n.$ In particular, $C_{\phi}$ is positively expansive on $B^2_a.$
\end{prop}
\begin{proof}
Let $\psi$ be the automorphism defined by $\psi(z)=z+d/(1-c).$ For each $n,$ we consider the auxiliary symbols $\varphi_n$ and $\psi_n$ defined as follows
\begin{align*}
\varphi_n(z)=c ^nz  \quad \text{and} \quad \psi_n(z)=z+\frac{1-c ^n}{1-c }d.
\end{align*}
Then $\phi^{n}=\psi_n\circ \varphi_n.$ Let $f\in B^2_a.$ If $f$ is the zero function then the result is immediate. So we can assume that $f$ is not the zero function. By Proposition \ref{first estimates}, we have
\begin{align*}
\|C_{\phi}^nf\|_{a}=\|C_{\varphi_n}C_{\psi_n}f\|_{a}= (1/\sqrt{|c|^n})\|C_{\psi_n}f\|_{a}.
\end{align*}
We next obtain a lower estimate for $\|C_{\psi_n}f\|_{a}.$ For each $w\in \mathbb{C},$ the Cauchy--Schwarz inequality gives
\begin{align*}
\|C_{\psi_n}f\|_{a}\|k_w\|_{a}\geq |\langle C_{\psi_n}f, k_w\rangle|=|\langle f, C_{\psi_n}^*k_w\rangle|=|\langle f, k_{\psi_n(w)}\rangle|=|f(\psi_n(w))|.
\end{align*}
Since $0<|c|<1,$ it follows that $\psi_n(w)\rightarrow \psi(w)$ as $n\rightarrow \infty,$ for all $w\in \mathbb{C}.$ In particular, $|f(\psi_n(w))|\rightarrow |f(\psi(w))|$ as $n\rightarrow \infty.$ Moreover, $\psi(\mathbb{C})=\mathbb{C}$ and $f$ is a non-zero holomorphic function, hence $f(\psi(\mathbb{C}))\neq\{0\}.$ Then we can choose $\beta\in \mathbb{C}$ with $f(\psi(\beta))\neq 0.$  In particular, for all sufficiently large $n,$ we have
$|f(\psi_n(\beta))|>|f(\psi(\beta))|/2.$ If $\delta=\frac{|f(\psi(\beta))|}{2\|k_{\beta}\|_{a}\|f\|_{a}},$ then
\begin{align}\label{expansiveI}
\|C_{\phi}^nf\|_a\geq \frac{1}{\sqrt{|c^n|}}\frac{|f(\psi_n(\beta))|}{\|k_{\beta}\|_a}>\frac{1}{\sqrt{|c^n|}}\frac{|f(\psi(\beta))|}{2\|k_{\beta}\|_a\|f\|_a}\|f\|_a=\frac{\delta}{\sqrt{|c^n|}}\|f\|_a
\end{align}
for all sufficiently large $n.$ In particular, since $1/\sqrt{|c|^n}\to \infty$ as $n\to \infty,$ the inequality \eqref{expansiveI} allows us to conclude that the orbits for $C_{\phi}$ are unbounded for each non-zero function in $B^2_a.$ Hence by  Theorem \ref{positively}, $C_{\phi}$ is positively expansive.
\end{proof}

We therefore obtain a characterization of all composition operators that are positively expansive on $B^2_a$ as follows.

\begin{thm}\label{ThmExpansive}
Let $\phi(z)=cz+d$ where $c\in \mathbb{R}$ with $0<|c|\leq 1$ and $d\in \mathbb{C}$. Then $C_{\phi}$ is positively expansive on $B^2_a$ if and only if $0<|c|<1$.
\end{thm}

Since positively expansive operators on Banach spaces cannot be Li-Yorke chaotic \cite[Theorem C]{Bernardes}, Theorem \ref{ThmExpansive} helps us prove that no Paley-Wiener space $B^2_a$ supports Li-Yorke chaotic composition operators.

\begin{prop}\label{Li-Yorke chaotic} No composition operator on $B^2_a$ is Li-Yorke chaotic.
\end{prop}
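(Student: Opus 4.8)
The plan is to show that for every bounded composition operator $C_\phi$ on $PW_a$ and every nonzero $f\in PW_a$, at least one of the two defining conditions for an irregular vector fails, so no irregular vector exists. I will organize the argument around the parameter $c$, exactly as in the spectral results, since the cases $c=1$, $c=-1$, and $0<|c|<1$ behave very differently.

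First I would dispose of the boundary cases $|c|=1$. When $c=1$, the operator $C_\phi$ is normal (it is isometrically similar to the multiplication operator $M_{e^{\mathrm{i}dt}}$), and Bermúdez \textit{et al.}~\cite{Bermudez} showed that a Li-Yorke operator is never normal; this immediately rules out Li-Yorke chaos. When $c=-1$, the remark just preceding the statement gives $C_\phi^{2n}=I$ and $C_\phi^{2n+1}=C_\phi$, and Proposition~\ref{inequality operator norm} bounds $\|C_\phi^n\|\le e^{|\im(d)|a}$ uniformly in $n$. Hence for any $f$ the sequence $\|C_\phi^n f\|$ is bounded, so $\limsup_{n}\|C_\phi^n f\|<\infty$ and the $\limsup=\infty$ condition can never hold; there is no irregular vector.

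The remaining case $0<|c|<1$ is where the real work lies, and here Lemma~\ref{second estimates} is the key tool. For any nonzero $f$ it provides a $\delta>0$ with $\|C_\phi^n f\|\ge (\delta/\sqrt{|c|^{\,n}})\|f\|$ for all large $n$. Since $|c|<1$, the factor $|c|^{-n/2}\to\infty$, which forces $\|C_\phi^n f\|\to\infty$; in particular $\liminf_n\|C_\phi^n f\|=\infty\neq 0$. Thus the first requirement for an irregular vector, namely $\liminf_n\|C_\phi^n f\|=0$, fails for every nonzero $f$, and again no irregular vector can exist.

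Assembling the three cases shows that $C_\phi$ admits no irregular vector for any admissible symbol $\phi$, so $C_\phi$ is never Li-Yorke chaotic, which is the assertion. I expect the only genuine obstacle to be the $0<|c|<1$ case, and that obstacle has essentially been absorbed into Lemma~\ref{second estimates}: the delicate point there is the uniform lower bound on $\|C_{\psi_n}f\|$ obtained by evaluating $f$ at the convergent points $\psi_n(w_0)\to w_0+d/(1-c)$ and choosing $w_0$ so that $f$ does not vanish at the limit. Once that lemma is in hand, the present proposition reduces to the bookkeeping of reading off the $\liminf$ and $\limsup$ behavior in each regime, so the main difficulty has already been handled upstream.
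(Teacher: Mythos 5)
Your proposal is correct and follows essentially the same route as the paper's own proof: normality via Bermúdez \emph{et al.} for $c=1$, the uniform norm bound $\|C_\phi^n\|\le e^{|\im(d)|a}$ killing the $\limsup$ condition for $c=-1$, and Lemma~\ref{second estimates} forcing $\liminf_n\|C_\phi^n f\|>0$ when $0<|c|<1$. The only cosmetic difference is that you conclude $\liminf_n\|C_\phi^n f\|=\infty$ where the paper settles for $\liminf_n\|C_\phi^n f\|\ge 2$; both equally rule out irregular vectors.
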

\begin{proof}
Let $\phi(z)=cz+d$ where $c\in \mathbb{R}$ with $0<|c|\leq 1$ and $d\in \mathbb{C}.$ If $c=-1,$ we have already seen that each orbit of $C_{\phi}$ forms a finite set for all $f\in B^2_a$, and hence $C_{\phi}$ cannot be Li-Yorke chaotic. If $c=1,$ then $C_{\phi}$ is normal (Proposition \ref{normal-unitary}). Since normal operators are not Li-Yorke chaotic \cite[Corollary 6]{Bermudez}, it follows that $C_{\phi}$ is not Li-Yorke chaotic. If $0<|c|<1,$ then $C_{\phi}$ is positively expansive by Theorem \ref{ThmExpansive} and hence it cannot be Li-Yorke chaotic according to \cite[Theorem C]{Bernardes}.
\end{proof}

\begin{cor} No composition operator on $B^2_a$ is hypercyclic.
\end{cor}

The corollary above can also be obtained from the fact that no Paley-Wiener space $B^2_a$ supports supercyclic operators \cite[Theorem 4]{Hai}.

We finish this work by using \eqref{non-expansive} and \eqref{inequality} to characterize all composition operator that are absolutely Ces\`aro bounded on the Paley-Wiener space $B^2_a.$

\begin{thm}\label{abs}
Let $\phi(z)=cz+d$ where $c\in \mathbb{R}$ with $0<|c|\leq1$ and $d\in \mathbb{C}$. Then $C_{\phi}$ is absolutely Ces\`aro bounded on $ B^2_a$ if and only if $c=-1$ or $c=1$ and $d\in\mathbb{R}.$ 
\end{thm}

\begin{proof} We have already seen that if $c = -1,$ then $\phi^n$ is the identity map or the symbol $\phi$ itself. Hence, in both cases, $\|C_{\phi}^n\|\leq e^{|\mathrm{Im}(d)|a}$ for all $n$ (see \eqref{estimates}). This inequality also holds if $c=1$ and $d\in \mathbb{R},$ since $C_{\phi}$ is unitary (see Proposition \ref{normal-unitary}). Then given $f\in B^2_a$ and $n\in \mathbb{N},$ we have
\begin{align*}
 \dfrac{1}{n}\sum_{j=1}^{n}\|C^j_{\phi}f\|_{a}\leq \frac{1}{n}\sum_{j=1}^ne^{|\mathrm{Im}(d)|a}\|f\|_a=e^{|\mathrm{Im}(d)|a}\|f\|_a
\end{align*}
which shows us that $C_{\phi}$ is absolutely Ces\`aro bounded. Now we  consider $0<|c|<1.$ In this case, the sequence formed by the numbers $\frac{1}{n\sqrt{|c|^n}}$ is unbounded. Moreover,  by Proposition \ref{second estimates} there exists $\delta > 0$ such that $\|C^{n}_{\phi}k_1\|_{a} \geq (\delta/\sqrt{|c|^n})\|k_1\|_{a}$ for all sufficiently large $n \in \mathbb{N}.$ Taking sufficiently large $n,$ we get
\begin{align*}
\frac{1}{n}\sum_{j=1}^{n}\|C^{j}_{\phi}k_1\|_{a}\geq \frac{1}{n}\|C_{\phi}^nk_1\|_{a}\geq  \frac{\delta}{n\sqrt{|c|^n}}\|k_1\|_{a}
\end{align*}
which implies that $C_{\phi}$ is not an absolutely Cesàro bounded operator. Finally, we consider $c=1$ and $d\in\mathbb{C}\setminus\mathbb{R}.$ Let $F_2\in L^2[-a,a]$ be the a function given in Lemma \ref{witness lemma} and 
$U$ as in Proposition \ref{third estimate}, then $UF_2$ is a non-zero function in $B^2_a$ and for all $n$ we have
\begin{align*}
\|C_{\phi}^nUF_2\|_a=\|UM_{e^{idt}}^nF_2\|_{a}=\|M_{e^{idt}}^nF_2\|_{L^2[-a,a]}=\sqrt{\frac{e^{2n\,|\mathrm{Im}(d)|\,a}-1}{2n\,|\mathrm{Im}(d)|}}.
\end{align*}
Since the numbers $\sqrt{\frac{e^{2n\,|\mathrm{Im}(d)|\,a}-1}{2n^3\,|\mathrm{Im}(d)|}}$ form a unbounded sequence and 
\begin{align*}
\frac{1}{n}\sum_{j=1}^{n}\|C_{\phi}^{j}UF_2\|_{a}\geq \frac{1}{n}\|C_{\phi}^{n}F_2\|_{a}\geq\frac{1}{n}\sqrt{\frac{e^{2n\,|\mathrm{Im}(d)|\,a}-1}{2n\,|\mathrm{Im}(d)|}}=\sqrt{\frac{e^{2n\,|\mathrm{Im}(d)|\,a}-1}{2n^3\,|\mathrm{Im}(d)|}},
\end{align*}
it follows that $C_{\phi}$ is not absolutely Ces\`aro bounded. 
\end{proof}

\printbibliography


\end{document}